\def\mod#1{{\ifmmode\text{\rm\ (mod~$#1$)}
\else\discretionary{}{}{\hbox{ }}\rm(mod~$#1$)\fi}}
\newtheorem{theorem}{Theorem}[section]
\newtheorem{lemma}[theorem]{Lemma}
\newtheorem{proposition}[theorem]{Proposition}
\newtheorem{conjecture}[theorem]{Conjecture}
\theoremstyle{remark}
\newtheorem{rem}{Remark}
\newtheorem{dfn}{Definition}
\numberwithin{equation}{section}
\def\mod#1{{\ifmmode\text{\rm\ (mod~$#1$)}
\else\discretionary{}{}{\hbox{ }}\rm(mod~$#1$)\fi}}
\begin{document}
\title{Mordell's equation : a classical approach}

\author{Michael A. Bennett}
\address{Department of Mathematics, University of British Columbia, Vancouver, BC Canada V6T 1Z2}
\email{bennett@math.ubc.ca}
\thanks{Supported in part by a grant from NSERC}

\author{Amir Ghadermarzi}
\address{Department of Mathematics, University of British Columbia, Vancouver, BC Canada V6T 1Z2}
\email{amir@math.ubc.ca}
%\thanks{Supported in part by a grant from NSERC}
%\subjclass{Primary 11D25, 11J86}

\date{\today}
\keywords{}

%-----------------------------------------------------------------------------
\begin{abstract}
We solve the Diophantine equation $Y^2=X^3+k$ for all nonzero integers $k$ with $|k| \leq 10^7$.
Our approach uses a classical connection between these equations and cubic Thue equations. The latter  can be treated algorithmically via lower bounds for linear forms in logarithms in conjunction with lattice-basis reduction.
\end{abstract}

\maketitle

\section{Introduction}
If $k$ is a nonzero integer, then the equation  
\begin{equation} \label{Mord}
Y^2=X^3+k
\end{equation}
defines an elliptic curve over $\mathbb{Q}$. Such Diophantine equations have a long history, dating  back (at least) to work of Bachet in the 17th century, and are nowadays termed {\it Mordell equations}, honouring the  substantial contributions of L. J. Mordell to their study. Indeed the statement that, for a given $k \neq 0$, equation (\ref{Mord}) has at most  finitely many integral solutions is implicit in work of Mordell \cite{Mor2} (via application of a result of Thue \cite{Th}), and explicitly stated in \cite{Mor3}.

Historically, the earliest approaches to equation (\ref{Mord}) for certain special values of $k$ appealed to simple local arguments; references to such work may be found in Dickson \cite{Dic}. More generally, working in either $\mathbb{Q} (\sqrt{-k})$ or $\mathbb{Q} (\sqrt[3]{k})$, one is led to consider a finite number of Thue equations of the shape $F(x,y)=m$, where the $m$ are nonzero integers and the $F$ are, respectively, binary cubic or quartic forms with rational integer coefficients.  Via classical arguments of Lagrange (see e.g. page 673 of Dickson \cite{Dic}), these in turn  correspond to a finite (though typically larger) collection of Thue equations of the shape $G(x,y)=1$. Here, again, the $G$ are binary cubic or quartic forms with integer coefficients. In case $k$ is positive, one encounters cubic forms of negative discriminant which may typically be treated rather easily via Skolem's $p$-adic method (as the corresponding cubic fields have a single fundamental unit). For negative values of $k$, one is led to cubic or quartic fields with a pair of fundamental units, which may sometimes be treated by similar if rather more complicated methods; see e.g. \cite{FL},  \cite{He} and \cite{Lj}.

There are alternative approaches for finding the integral points on a given model of an elliptic curve. The most commonly used currently proceeds via appeal to lower bounds for linear forms in elliptic logarithms,  the idea for which dates back to work of Lang \cite{L} and Zagier \cite{Z} (though the  bounds required to make such arguments explicit are found in work of David and of Hirata-Kohno, see e.g. \cite{DaHi}). Using these bounds, Gebel, Peth\H{o} and Zimmer \cite{G}, Smart \cite{S} and Stroeker, Tzanakis \cite{T} obtained, independently,  a ``practical'' method to find integral points on elliptic curves. Applying this method, in 1998, Gebel, Peth\H{o}, Zimmer \cite{P} solved equation (\ref{Mord}) for all integers $\left\lvert k \right\rvert <10^4$ and partially extended the computation to 
$\left\lvert k \right\rvert <10^5$. As a byproduct of their calculation, they obtained a variety of interesting  information about the corresponding elliptic curves, such as their ranks, generators of their Mordell-Weil groups, and information on their Tate-Shafarevic  groups. 

The only obvious disadvantage of this approach is its dependence upon knowledge of the Mordell-Weil basis over $\mathbb{Q}$ of the given Mordell curve (indeed, it is this dependence that ensures, with current technology at least, that this method is not strictly speaking algorithmic). For curves of large rank, in practical terms, this means that the method cannot be guaranteed to solve equation (\ref{Mord}). 

Our goal in this paper is to present (and demonstrate the results of) a practical algorithm for solving Mordell equations with values of $k$ in a somewhat larger range. At its heart are lower bounds for linear forms in complex logarithms, stemming from the work of Baker \cite{Bak1}. These were first applied in the context of explicitly solving equation (\ref{Mord}), for fixed $k$, by Ellison et al \cite{Ell}.
To handle values of $k$ in a relatively large range, we will appeal to classical invariant theory and, in particular, to the reduction theory of binary cubic forms (where we have available very accessible, algorithmic work of Belabas \cite{Be}, Belabas and Cohen \cite{BeCo} and Cremona \cite{Cr}). This approach has previously been outlined by Delone and Fadeev  (see Section 78 of \cite{DF}) and  Mordell (see e.g. \cite{Mor}; its origins lie in \cite{Mor1}). We use it to solve Mordell's equation (\ref{Mord}) for all $k$ with $0< \left \lvert k \right\rvert \leq 10^7$. We should emphasize that our algorithm does not provide {\it a priori} information upon, say, the ranks of the corresponding elliptic curves, though values of $k$ for which (\ref{Mord}) has many solutions necessarily (as long as $k$ is $6$-th power free) provide curves with at least moderately large rank (see \cite{GrSi}).

We proceed as follows. In Section  \ref{Pre}, we will discuss the precise correspondence that exists between integer solutions to (\ref{Mord}) and integer solutions to cubic Thue equations of the shape $F(x,y)=1$, for certain binary cubic forms of discriminant $-108k$. In Section \ref{rep}, we indicate a method to choose representatives from equivalence classes of forms of a given discriminant. Section \ref{comp} contains a brief discussion of our computation, while Section \ref{Numer} is devoted to presenting a summary of our data, including information on both the number of solutions, and on their heights.

%-------------------------------------------
\section{Preliminaries} \label{Pre}
%------------------------------------------

In this section, we begin by outlining a correspondence between integer solutions to the equation $Y^2=X^3+k$ and solutions to certain cubic Thue equations of the form $F(x,y)=1$, where $F$ is a binary cubic form of discriminant $-108k$. As noted earlier, this approach is very classical. To make it computationally efficient, however, there are a number of details that we must treat rather carefully.

Let us suppose that $a, b, c$ and $d$ are integers, and consider the binary cubic form 
\begin{equation} \label{form}
F(x,y)=ax^3+3bx^2y+3cxy^2+dy^3, 
\end{equation}
with discriminant 
$$
D=D_{F}=-27 \left( a^2d^2-6abcd-3b^2c^2+4ac^3+4b^3d \right).
$$
It is important to observe (as a short calculation reveals) that the set of forms of the shape (\ref{form}) is closed within the larger set of binary cubic forms in $\mathbb{Z}[x,y]$, under the action of both $SL_2 ( \mathbb{Z})$ and $GL_2 ( \mathbb{Z})$.
To such a form we associate  covariants, namely  the Hessian  $H=H_F (x,y) $ given by
$$
H=  H_F (x,y)=  - \frac{1}{4} \left(\frac{\partial^2 F}{\partial x^2} \frac{\partial^2 F}{\partial y^2} - \left(\frac{\partial^2 F}{\partial x \partial y}\right)^2 \right) 
$$  
 and the Jacobian determinant of $F$ and $H$, a cubic form $G=G_F$ defined as
$$
G=G_F (x,y)=\frac{\partial F}{\partial x}\frac{\partial H}{\partial y}-  \frac{\partial F}{\partial y} \frac{\partial H}{\partial x}.
$$
Note that, explicitly, we have
$$
H/9= (b^2-ac) x^2 + (bc-ad) xy + (c^2-bd) y^2 
$$
and
$$
G/27=  a_1 x^3 + 3 b_1 x^2 y + 3 c_1 x y^2 + d_1 y^3,
$$
where
$$
a_1=-a^2d+3abc-2b^3, \; b_1=-b^2c-abd+2ac^2, \; c_1=bc^2-2b^2d+acd
$$
and $d_1=-3bcd+2c^3+ad^2$.

Crucially for our arguments, these covariants satisfy the syzygy
    $$
     4H(x,y)^3=G(x,y)^2+27D F(x,y)^2.
  $$
  %   Note that $H$ and $D$ are covariants under the action of $GL_2 ( \mathbb{Z})$ while $G$ is a covariant under  $SL_2 ( \mathbb{Z})$-action (so that  its square $G^2$ is a $GL_2 ( \mathbb{Z})$ covariant).
Defining $D_1=D/27$, $H_1=H/9$ and $G_1=G/27$, we thus have
$$
4H_1(x,y)^3=G_1(x,y)^2+D_1 F(x,y)^2.
$$ 
If $(x_0,y_0)$ satisfies the equation $F(x_0,y_0)=1$ and $D_1 \equiv 0 \mod{4}$ (i.e. if $ad \equiv bc \mod{2}$), then necessarily $G_1(x_0,y_0) \equiv 0 \mod {2}$. We may therefore conclude that $Y^2=X^3+k$, where
$$
X=H_1(x_0,y_0), \; \; Y=\frac{G_1(x_0,y_0)}{2} \; \mbox{ and } \; k = -\frac{D_1}{4} =  -\frac{D}{108}.
$$
   
 It follows that, to a given triple $(F,x_0,y_0)$, where $F$ is a cubic form as in (\ref{form}) with discriminant $-108k$, and $x_0, y_0$ are integers for which $F(x_0,y_0)=1$, we can associate an integral point on the Mordell curve $Y^2=X^3+k$.
   
     %So let $F(x,y)$ be a cubic binary form with discriminant $-108k$ ,and in the shape (\ref{form}). then any solution $(x,y)$ gives a solution for the Mordell's equation $y^2=x^3+k.$
    Conversely suppose, for a given integer $k$, that $(X,Y)$ satisfies equation (\ref{Mord}). To the pair $(X,Y)$, we associate the cubic form 
$$
F(x,y)=x^3-3X xy^2+2Yy^3. 
$$
Such a form $F$ is of the shape (\ref{form}), with discriminant  
$$
D_F=-108Y^2+108X^3=-108k
$$
and covariants satisfying 
$$
X= \frac{G_1(1,0)}{2}=\frac{G(1,0)}{54} \; \mbox{ and } \;  Y= H_1(1,0)= \frac{H(1,0)}{9}.
$$

 In summary, there exists a correspondence between the set of integral solutions 
 $$
 S_k = \left\{ (X_1,Y_1), \ldots, (X_{N_k},Y_{N_k}) \right\}
 $$
to the Mordell equation  $Y^2=X^3+k$ and the set $T_k$ of triples  $(F,x,y)$, where each $F$ is a binary cubic form of the shape (\ref{form}), with discriminant $-108k,$ and the integers $x$ and $y$ satisfy $F(x,y)=1$. Note that the forms $F$ under consideration here need not be irreducible.
 
In the remainder of this section, we will  show that there is, in fact,  a bijection between $T_k$ under $SL_2(\mathbb{Z})$-equivalence, and the set $S_k$. We begin by demonstrating the following pair of lemmata.
\begin{lemma} Let $k$ be a nonzero integer and suppose that
$F_1$ and $F_2$ are $SL_2 ( \mathbb{Z})$-inequivalent binary cubic forms of the shape (\ref{form}), each with discriminant $-108k$, and that $x_1, y_1, x_2$ and $y_2$ are integers such that  $(F_1,x_1,y_1)$ and $(F_2,x_2,y_2)$ are in $T_k$.
Then the tuples $(F_1,x_1,y_1)$ and $(F_2,x_2,y_2)$ correspond to distinct elements of $S_k$.
\end{lemma}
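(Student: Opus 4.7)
My plan is to argue the contrapositive: assume the two triples yield the same point $(X, Y) \in S_k$ and deduce that $F_1$ and $F_2$ must be $SL_2(\mathbb{Z})$-equivalent. Since $F_i(x_i, y_i) = 1$, any common prime factor of $x_i$ and $y_i$ would divide $1$, so $\gcd(x_i, y_i) = 1$. For each $i$, choose $u_i, v_i \in \mathbb{Z}$ with $x_i v_i - y_i u_i = 1$, set $M_i = \begin{pmatrix} x_i & u_i \\ y_i & v_i \end{pmatrix} \in SL_2(\mathbb{Z})$, and define $\tilde{F}_i(x, y) := F_i(x_i x + u_i y,\, y_i x + v_i y)$. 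Since the set of forms of shape (\ref{form}) is preserved by $SL_2(\mathbb{Z})$, each $\tilde{F}_i$ is again of this shape with discriminant $-108k$, and $\tilde{F}_i(1, 0) = F_i(x_i, y_i) = 1$ forces the leading coefficient of $\tilde{F}_i$ to be $1$.

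The next step is to use the equivariance of the covariants under $SL_2(\mathbb{Z})$: a direct calculation from $\mathrm{Hess}(F \circ M) = M^{\top} (\mathrm{Hess}(F) \circ M) M$ (and the analogous identity for $G$, which incurs a factor $\det M$ that equals $1$ on $SL_2$) shows that $H_{\tilde{F}_i}(1, 0) = H_{F_i}(x_i, y_i)$ and $G_{\tilde{F}_i}(1, 0) = G_{F_i}(x_i, y_i)$. Thus $(\tilde{F}_i, 1, 0) \in T_k$ corresponds to the same $(X, Y)$. Writing $\tilde{F}_i = x^3 + 3 b_i x^2 y + 3 c_i x y^2 + d_i y^3$, the explicit expressions for $H_1$ and $G_1$ given in the excerpt yield $X = b_i^2 - c_i$ and $2Y = -d_i + 3 b_i c_i - 2 b_i^3$, whence
$$ c_i = b_i^2 - X, \qquad d_i = b_i^3 - 3 X b_i - 2 Y. $$
Therefore $\tilde{F}_i$ is completely determined by the single integer $b_i$ (together with the common data $X, Y$).

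To connect the two reduced forms, consider the shear $T_m = \begin{pmatrix} 1 & m \\ 0 & 1 \end{pmatrix} \in SL_2(\mathbb{Z})$ with $m = b_2 - b_1$. Since $T_m$ fixes $(1,0)^{\top}$, the form $\tilde{F}_1 \cdot T_m$ again has leading coefficient $1$ and satisfies the same covariant conditions at $(1, 0)$, while a direct expansion shows its $x^2 y$ coefficient equals $3(b_1 + m) = 3 b_2$. By the uniqueness established above, $\tilde{F}_1 \cdot T_m = \tilde{F}_2$, hence $F_2 = F_1 \cdot (M_1 T_m M_2^{-1})$ with $M_1 T_m M_2^{-1} \in SL_2(\mathbb{Z})$, contradicting the assumed inequivalence. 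The main technical point is verifying the $SL_2(\mathbb{Z})$-equivariance of $H$ and $G$; once this is in place, the rigidity of the parametrization by $b$ alone makes the rest essentially bookkeeping.
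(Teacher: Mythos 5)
Your argument is correct and follows essentially the same route as the paper: translate each solution $(x_i,y_i)$ to $(1,0)$ by a unimodular matrix, use $SL_2(\mathbb{Z})$-covariance of $H$ and $G$ to see that the resulting monic form is pinned down by the common data $(X,Y)$ together with its $x^2y$-coefficient, and then observe that this residual freedom is exactly a shear. The only cosmetic difference is that the paper normalizes both forms to the canonical representative $x^3-3Xxy^2+2Yy^3$ (i.e.\ shears $b_i$ to $0$), whereas you shear $b_1$ directly onto $b_2$; the content is identical.
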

 \begin{proof}
Suppose that the tuples $(F_1,x_1,y_1)$ and $(F_2,x_2,y_2)$ are in $T_k$, so that
$$
F_1(x_1,y_1)=F_2(x_2,y_2)=1.
$$
Since, for each $i$,  $x_i$ and $y_i$ are necessarily coprime, we can find integers $m_i$ and $n_i$ such that $m_ix_i-n_iy_i=1$, for $i=1, 2$. Writing $\tau_i = \left(\begin{smallmatrix} 
x_i & n_i \\
y_i & m_i
 \end{smallmatrix}\right)$, we thus have
 $$
 F_i \circ \tau_i  (x,y) = x^3+3b_ix^2y+3c_ixy^2+d_iy^3,
 $$
 for integers $b_i, c_i$ and $d_i$, and hence, under the further action of 
 $\gamma_i = \left(\begin{smallmatrix} 
  1 & -b_i \\
  1 & 0 
 \end{smallmatrix}\right),$
 we observe that $F_i$ is $SL_2 ( \mathbb{Z})$-equivalent to 
 $$
 x^3-3p_ixy^2+2q_iy^3,
 $$
 where
 $$
p_i=\frac{G_{F_i}(x_i,y_i)}{54} \  \  \text{and} \  \ q_i = \frac{H_{F_i}(x_i,y_i)}{9}.
 $$
  If the two tuples correspond to the same element of $S_k$, necessarily
 $$
 G_{F_1}(x_1,y_1) = G_{F_2}(x_2,y_2) \; \mbox{ and } \;  H_{F_1}(x_1,y_1)  = H_{F_2}(x_2,y_2),
 $$
contradicting our assumption that $F_1$ and $F_2$ are $SL_2 ( \mathbb{Z})$-inequivalent.
 \end{proof}
      
\begin{lemma}
Suppose that $k$ is a nonzero integer, that $F$ is a binary cubic form of the shape (\ref{form}) and discriminant $-108k$, and that $F(x_0,y_0)=F(x_1,y_1)=1$ where $(x_0,y_0)$ and $(x_1,y_1)$ are distinct pairs of integers. Then the tuples $(F,x_0,y_0)$ and $(F,x_1,y_1)$ correspond to distinct elements of $S_k$.
\end{lemma}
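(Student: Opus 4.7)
The plan is to argue by contradiction: suppose $(F,x_0,y_0)$ and $(F,x_1,y_1)$ both correspond to the same point $(X,Y) \in S_k$, and then derive an impossibility modulo $8$.

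My first step is to pass to a convenient normal form. Exactly as in the construction of the preceding lemma, I choose $\sigma \in SL_2(\mathbb{Z})$ carrying $(1,0)^T$ to $(x_0,y_0)^T$, followed by a unipotent shift removing the $x^2 y$ coefficient. Because $H_1$ and $G_1$ are $SL_2(\mathbb{Z})$-covariants, this transforms $(F,x_0,y_0)$ into $(\tilde F, 1, 0)$, where
$$
\tilde F(x,y) = x^3 - 3X x y^2 - 2 Y y^3,
$$
and simultaneously carries $(x_1,y_1)$ to some distinct pair $(x',y') \ne (1,0)$ satisfying $\tilde F(x',y')=1$, $H_1(x',y')=X$, and $G_1(x',y')=2Y$. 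The case $y'=0$ is ruled out at once, since then $\tilde F(x',0)=x'^3=1$ forces $(x',y')=(1,0)$.

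The heart of the argument is a one-variable identity. A direct calculation gives $H_1(x,y)=X x^2 + 2Y xy + X^2 y^2$ for $\tilde F$, together with an explicit cubic expression for $G_1$. Using $\tilde F(x',y')=1$ to eliminate the $x'^3$ term in $G_1(x',y')=2Y$, and invoking $Y^2=X^3+k$, I reduce the $G_1$-equation to a homogeneous quadratic in $(x',y')$; subtracting $3X$ times the equation $H_1(x',y')=X$ then cancels every term involving $x'$ and yields the clean identity
$$
4 k y'^2 + 3 X^2 = 0.
$$

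The final step is a modular obstruction. The identity immediately rules out $k>0$, and for $k=-m<0$ it reads $3 X^2 = 4 m y'^2$, forcing $2 \mid X$. Writing $X = 2 X_1$, coprimality in $3 X_1^2 = m y'^2$ forces $y' \mid X_1$; setting $e = X_1 / y'$ gives $m = 3 e^2$ and $X = 2 e y'$ for some nonzero integer $e$. Substituting into $Y^2 = X^3 + k$ yields $Y^2 = e^2(8 e y'^3 - 3)$, so $e \mid Y$; writing $Y = eZ$ leads to $Z^2 = 8 e y'^3 - 3 \equiv 5 \pmod{8}$, which is impossible since squares modulo $8$ lie in $\{0,1,4\}$. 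I expect the main bookkeeping to occur in step two; once the relation $4k y'^2 + 3 X^2 = 0$ is in hand, the descent and the mod-$8$ obstruction dispatch the problem quickly.
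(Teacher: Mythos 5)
Your argument is correct, and its first two thirds coincide with the paper's: normalize by $SL_2(\mathbb{Z})$ so that the triple becomes $(\tilde F,1,0)$ with $\tilde F$ in the standard shape attached to $(X,Y)$, then eliminate $x'$ between the three conditions $\tilde F(x',y')=1$, $H_1(x',y')=X$, $G_1(x',y')=2Y$ to reach the key relation $4ky'^2+3X^2=0$ --- this is exactly the paper's identity $D_F\,y_1^2=81(b^2-c)^2$ read in your normalized coordinates, and both proofs then conclude $k=-3e^2$ with $X=2ey'$. Where you genuinely diverge is the endgame. The paper substitutes the resulting expressions for $x_1$ and $y_1$ back into its quadratic to pin down $D_F=324$, i.e.\ $k=-3$, and then kills $Y^2=X^3-3$ by factoring $Y^2+4=(X+1)(X^2-X+1)$ and invoking the sum-of-two-squares obstruction at a prime $\equiv 3\pmod 4$. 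You instead feed $k=-3e^2$, $X=2ey'$ directly into the Mordell equation, obtaining $Y^2=e^2(8ey'^3-3)$ and hence $Z^2\equiv 5\pmod 8$, which is impossible. Your finish is shorter and avoids the back-substitution entirely; the paper's buys the sharper intermediate fact that the obstruction is concentrated at the single discriminant $D_F=324$. One small presentational point: the step ``coprimality in $3X_1^2=my'^2$ forces $y'\mid X_1$'' is not literally a coprimality argument --- what you are using is that $y'^2\mid 3X_1^2$ implies $v_p(y')\le v_p(X_1)$ for every prime $p$ (the prime $p=3$ needing the parity of $2v_3(y')\le 1+2v_3(X_1)$); stated that way the deduction is airtight, so this is a matter of wording rather than a gap.
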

   
\begin{proof}
Via $SL_2 ( \mathbb{Z})$-action, we may suppose, without loss of generality, that $F(x,y) = x^3+3bx^2y+3cxy^2+dy^3$ and that $(x_0,y_0)=(1,0)$. If the triples
$(F,1,0)$ and $(F,x_1,y_1)$ correspond to the same element of $S_k$, necessarily
$$
 G_{F}(1,0) = G_{F}(x_1,y_1) \; \mbox{ and } \;  H_{F}(1,0)  = H_{F}(x_1,y_1),
 $$
 whereby
 \begin{equation} \label{temp1}
 x_1^3+3bx_1^2y_1+3cx_1y_1^2+dy_1^3=1,
 \end{equation}
 \begin{equation} \label{temp2}
 (b^2-c) x_1^2 + (bc-d) x_1y_1 + (c^2-bd) y_1^2 = (b^2-c)
 \end{equation}
and
\begin{equation} \label{temp3}
a_1 x_1^3 + 3 b_1 x_1^2 y_1 + 3 c_1 x_1 y_1^2 + d_1 y_1^3 = a_1.
 \end{equation}
It follows that
$$
3 (b a_1-b_1) x_1^2 y_1 + 3 (c a_1 - c_1) x_1 y_1^2 + (d a_1 - d_1) y_1^3=0.
$$
Since $(x_1,y_1) \neq (1,0)$, we have that $y_1 \neq 0$ and so
$$
3 (b a_1-b_1) x_1^2 + 3 (c a_1 - c_1) x_1 y_1 + (d a_1 - d_1) y_1^2=0,
$$
i.e.
\begin{equation} \label{temp4}
-3 (b^2-c)^2 x_1^2 + 3 (b^2-c)(d-bc)  x_1 y_1 + (3bcd-b^3d -c^3-d^2) y_1^2=0.
\end{equation}
If $b^2=c$, it follows that 
$$
3bcd-b^3d -c^3-d^2=- (d-b^3)^2 =0,
$$
so that $d=b^3$ and $F(x,y) = (x+by)^3$, contradicting our assumption that $D_F \neq 0$. We thus have
$$
(b^2-c) x_1^2 + (bc-d) x_1y_1 = \frac{(3bcd-b^3d -c^3-d^2)}{3 ( b^2-c)} y_1^2
$$
and so
$$
\frac{(3bcd-b^3d -c^3-d^2)}{3 ( b^2-c)} y_1^2 = (bd - c^2) y_1^2 + b^2-c,
$$
i.e.
$$
\left(-d^2+6bcd-4b^3d-4c^3+3b^2c^2 \right) y_1^2 = 3 (b^2-c)^2
$$
and so
$$
D_F \, y_1^2 = 81 (b^2-c)^2.
$$
Since $D_F=-108k$, it follows that $k = -3 m^2$ for some integer $m$, where $2 m y_1 = b^2-c$. 
From (\ref{temp4}), we thus have
$$
-12 m^2 x_1^2  + 6 (d-bc) m x_1  + 3bcd-b^3d -c^3-d^2 =0,
$$
whereby
$$
x_1 = \frac{1}{4 m } \left( d-bc   \pm  2  \right).
$$
Substituting the expressions for $x_1$ and $y_1$ into equation (\ref{temp4}), we conclude, from $b^2 \neq c$, that
$$
\frac{-3}{4} (d-bc \pm 2)^2 + \frac{3}{2} (d-bc)(d-bc \pm 2) + 3bcd-b^3d -c^3-d^2 =0,
$$
whence, since the left-hand-side of this expression is just $D_F/27-12$, we find that $D_F = 324$. It follows that the triple $(F,1,0)$, say,  corresponds to an integral solution to the Mordell equation $Y^2=X^3-3$. 
Adding $4$ to both sides of this equation, however, we observe that necessarily  $X^2-X+1 \equiv 3 \mod{4}$,  contradicting the fact that it divides the sum of two squares $Y^2+4$. The lemma thus follows  as stated.
\end{proof}

  To conclude as desired, we have only to note that, for any $\gamma \in  SL_2 ( \mathbb{Z}),$ covariance implies that $H_{F \circ \gamma}=H_F \circ \gamma$ and  $G_{F \circ \gamma}=G_F \circ \gamma$, and hence triples
  $$
  (F, x_0, y_0) \; \mbox{ and } \;  (F \circ \gamma, \gamma(x_0), \gamma(y_0))
  $$
  in $T_k$ necessarily correspond to the same solution to (\ref{Mord}) in $S_k$.
  
\begin{rem}
 Instead of working with  $SL_2 ( \mathbb{Z})$-equivalence, we can instead consider $GL_2 ( \mathbb{Z})$-equivalence classes (and, as we shall see in the next section, this equivalence is arguably a more natural one with which to work). Since $H(x,y)$ and $G^2(x,y)$ are $GL_2 ( \mathbb{Z})$-covariant, if two forms are equivalent under the action of $GL_2 ( \mathbb{Z})$, but not under $SL_2 ( \mathbb{Z})$, then we have 
 $$
 H_{F \circ \gamma}=H_F \circ \gamma \ \  \text {and}  \ \  G_{F \circ \gamma}=-G_F \circ \gamma.
 $$
It follows that, in order to determine all pairs of integers $(X,Y)$ satisfying  equation (\ref{Mord}), it is sufficient  to find a representative for each   $GL_2 ( \mathbb{Z})$-equivalence class of forms of shape (\ref{form}) and discriminant $-108k$  and, for each such  form, solve the corresponding Thue equation. A pair of integers  $(x_0,y_0)$ for which $F(x_0,y_0)=1$ now leads to a pair of solutions $(X, \pm Y)$ to $Y^2=X^3+k$, where
 $$
 X=H_1(x_0,y_0) \; \mbox{ and } \; Y=G_1(x_0,y_0)/2,  
 $$
 at least provided $G_1(x_0,y_0) \neq 0$. 
\end{rem}

%---------------------------------------------------
\section{Finding representative forms} \label{rep}
%---------------------------------------------------

As we have demonstrated  in the previous section, to solve Mordell's equation for a given integer $k$, it suffices to determine a set of representatives for $SL_2 ( \mathbb{Z})$-equivalence classes (or, if we prefer,  $GL_2 ( \mathbb{Z})$-equivalence classes) of binary cubic forms of the shape (\ref{form}), with discriminant $-108k$, and then solve the corresponding Thue equations $F(x,y)=1$. In this section, we will describe how to find distinguished representatives for equivalence classes of cubic forms with a given discriminant. In all cases, the various notions of {\it reduction} arise from associating to a given cubic form a particular definite quadratic form -- in case of positive discriminant, the Hessian defined earlier works well. In what follows, we will state our definitions of reduction solely in terms of the coefficients of the given cubic form, keeping the role of the associated quadratic form hidden from view.

%-------------------------------------------------------
\subsection{Forms of positive discriminant}
%-------------------------------------------------------

In case of positive discriminant forms (i.e. those corresponding to negative values of $k$), there is a well-developed classical reduction theory, dating back to work of Hermite \cite{Her1}, \cite{Her2} and later applied to great effect by Davenport (see e.g. \cite{Dav}, \cite{Dav2} and \cite{DaHe}). This procedure allows us to determine distinguished {\it reduced} elements within each equivalence class of forms. We can, in fact, apply this reduction procedure to both irreducible and reducible forms; initially we will assume the forms we are treating are irreducible, for reasons which will become apparent. We will follow work of  Belabas \cite{Be} (see also Belabas and Cohen \cite{BeCo} and Cremona \cite{Cr}), in essence a modern treatment and refinement  of Hermite's method.

\begin{dfn} An irreducible binary integral cubic form 
$$
F(x,y) = ax^3 + 3 b x^2 y + 3 c x y^2 + d y^3
$$
of positive discriminant is called {\it reduced} if we have
 \begin{itemize}
 \item $ | bc-ad| \leq b^2 - ac \leq c^2 - bd$,
 \item  $a>0,b \geq 0$, where $d <0$ whenever $b=0$,
 \item if $bc=ad, d<0$,
 \item if $b^2-ac=bc-ad$, $b< \left \lvert a-b \right \rvert$, and
 \item if $b^2-ac=c^2-bd$, $a \leq \left \lvert d \right\rvert$ and $b < \left \lvert c \right\rvert$.   
 \end{itemize}
 \end{dfn}  
  The main value of this notion  of reduction is apparent in the following result (Corollary 3.3 of \cite{Be}).
  \begin{proposition} \label{goodie}
  Any irreducible cubic form of the shape  (\ref{form}) with positive discriminant is $GL_2 ( \mathbb{Z})$-equivalent to a unique reduced one. 
  \end{proposition}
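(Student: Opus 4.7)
The plan is to reduce the question to the classical reduction theory of positive-definite binary quadratic forms via the Hessian covariant, and then use the auxiliary sign and tie-breaking conditions in the definition to eliminate the residual ambiguities. A direct computation shows that the associated Hessian quadratic form
$$
H_1(x,y) = (b^2-ac) x^2 + (bc-ad) xy + (c^2-bd) y^2
$$
has discriminant $-D_F/27$, which is negative when $D_F > 0$, so $H_1$ is definite. Since $F$ is irreducible, $H_1$ has no real zero, and the normalization $a > 0$ then forces $H_1$ to be positive definite. Because $H$ is covariant, $H_{F \circ \gamma} = H_F \circ \gamma$ for every $\gamma \in GL_2(\mathbb{Z})$, so the map $F \mapsto H_1$ descends to a map of $GL_2(\mathbb{Z})$-orbits of cubic forms into $GL_2(\mathbb{Z})$-orbits of positive-definite quadratic forms.

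Observe next that the inequalities $|bc-ad| \leq b^2 - ac \leq c^2 - bd$ are precisely the classical Gauss conditions $|B| \leq A \leq C$ for the quadratic form $Ax^2+Bxy+Cy^2 = H_1$. For existence, I would exhibit an explicit reduction algorithm on $F$: apply translations $(x,y) \mapsto (x+ty,\,y)$ (which change the middle coefficient of $H_1$ modulo twice its leading coefficient) alternated with the inversion $(x,y) \mapsto (-y,\,x)$ (which swaps the outer coefficients of $H_1$). While $b^2 - ac > c^2 - bd$ or $|bc-ad| > b^2 - ac$, the leading coefficient of $H_1$ strictly decreases, so the process terminates at a cubic whose Hessian is reduced in the Gauss sense. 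A final composition with a matrix $\operatorname{diag}(\pm 1, \pm 1) \in GL_2(\mathbb{Z})$ achieves $a > 0$ and $b \geq 0$, and the clauses on the signs of $d$ (when $b=0$ or $bc=ad$) handle the remaining orientation ambiguity coming from $-I$.

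For uniqueness, suppose $F_1$ and $F_2$ are two reduced forms in the same $GL_2(\mathbb{Z})$-orbit. Then their Hessians are two reduced positive-definite quadratic forms in a common orbit and hence equal by classical reduction theory for quadratic forms. It remains to show that two reduced cubics with identical Hessian must coincide. This amounts to analyzing the finite stabilizer in $GL_2(\mathbb{Z})$ of a reduced $H_1$: it always contains $-I$, and in the boundary situations $|bc-ad| = b^2-ac$ or $b^2 - ac = c^2 - bd$ it is enlarged by an additional involution (a reflection). One then verifies that each auxiliary inequality in the definition, namely the conditions on the signs of $d$ and the magnitudes in the boundary clauses $b < |a-b|$ and $a \leq |d|,\ b < |c|$, kills off precisely one of the two boundary-case representatives. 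The main obstacle is this last bookkeeping step: one has to tabulate the action of each stabilizer element on $(a,b,c,d)$ and confirm case by case that the tie-breaking conditions leave exactly one reduced representative in each orbit. The irreducibility hypothesis intervenes here to exclude degenerate situations (such as $F = (x+by)^3$), which would otherwise collapse the Hessian and render the reduction ambiguous.
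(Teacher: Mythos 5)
Your strategy is the same one the paper relies on: the paper offers no proof of this proposition at all, quoting it as Corollary~3.3 of Belabas \cite{Be}, and Belabas's argument is precisely the Hermite-style reduction you describe --- pass to the positive definite Hessian covariant $H_1$ (whose discriminant is indeed $-D_F/27$), run Gauss reduction for existence, and dispose of the residual finite ambiguity with the tie-breaking clauses. So the plan is sound and matches the source.

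Two points, one minor and one that is a genuine hole as written. Minor: positive definiteness of $H_1$ when $D_F>0$ has nothing to do with the normalization $a>0$; since $H$ is quadratic in the coefficients of $F$ one has $H_{-F}=H_F$, so no sign choice on $F$ can affect it --- it is simply a classical fact about totally real cubics. More seriously, in the uniqueness step the assertion that two reduced positive definite quadratic forms in a common $GL_2(\mathbb{Z})$-orbit are \emph{equal} is false under the conditions actually imposed: $Ax^2+Bxy+Cy^2$ with $|B|\le A\le C$ is $GL_2(\mathbb{Z})$-equivalent to $Ax^2-Bxy+Cy^2$ via $\operatorname{diag}(1,-1)$, and the definition of a reduced cubic places no sign condition on the middle coefficient $bc-ad$ (only on $d$ when $bc=ad$). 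Hence the Hessians of two reduced cubics in one orbit need only agree up to the sign of $B$, the conjugating matrix need not lie in the stabilizer of $H_1$, and your reduction of uniqueness to ``two reduced cubics with identical Hessian'' misses exactly the elements that the clauses $a>0$, $b\ge 0$, and the $d$-sign conditions are designed to handle (note $\operatorname{diag}(1,-1)$ sends $(a,b,c,d)$ to $(a,-b,c,-d)$). Combined with the fact that the case-by-case tabulation of the stabilizer/normalizer action --- which is the entire technical content of the cited result --- is explicitly deferred, the proposal is a correct outline of the right proof rather than a proof.
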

 To determine equivalence classes of reduced cubic forms with bounded discriminant, we will appeal to the following result (immediate from Lemma 3.5 of Belabas \cite{Be}).
 \begin{lemma}
 Let $K$ be a positive real number and 
 $$
 F(x,y)= a x^3 + 3 b x^2 y + 3 c x y^2 + d y^3
 $$
be a reduced form whose discriminant lies in $(0,K]$. Then we have 
$$
1 \leq a  \leq \frac{2K^{1/4}}{3 \sqrt{3}}
 $$
 and
 $$ 
 0 \leq b \leq \frac{a}{2}+ \frac{1}{3} \, \left( \sqrt{K}-\frac{27a^2}{4} \right)^{1/2}.
 $$
 If we denote by  $P_2$ the unique positive real solution of the equation
 $$ 
    -4P_2^3+(3a+6b)^2P_2^2+27a^2K=0,
 $$
  then
  $$ 
  \frac{9b^2-P_2}{9a} \leq c \leq b-a.
  $$
    \end{lemma}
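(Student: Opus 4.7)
My plan is to exploit the fact that the reduction conditions on $F$ translate directly into Gauss reduction for its Hessian. Setting $H_1(x, y) = P x^2 + Q x y + R y^2$ with $P = b^2 - ac$, $Q = bc - ad$, $R = c^2 - bd$, the discriminant of $H_1$ is $Q^2 - 4 P R = -D/27 < 0$, so $H_1$ is positive definite, and the reduction inequalities $|bc - ad| \leq b^2 - ac \leq c^2 - bd$ are precisely $|Q| \leq P \leq R$. The classical bound for the first coefficient of a reduced positive-definite binary quadratic form then gives $P \leq \sqrt{(4PR - Q^2)/3} = \sqrt{D/81} \leq \sqrt{K}/9$.

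For the bound on $a$, I will rescale the syzygy to $108 H_1^3 = 27 G_1^2 + D F^2$ and evaluate at $(x, y) = (1, 0)$, obtaining $108 P^3 = 27 a_1^2 + D a^2 \geq D a^2$. Combining with the bound on $P$ yields $a^2 \leq 108 P^3/D \leq (4/27) \sqrt{D} \leq (4/27) \sqrt{K}$, whence $a \leq 2 K^{1/4}/(3\sqrt{3})$.

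The bound on $b$ will rest on first proving $c \leq b - a$. To establish this, multiply the inequality $c^2 - bd \geq b^2 - ac$ by $a > 0$ and the inequality $bc - ad \leq b^2 - ac$ by $b \geq 0$, then eliminate $abd$ between them; this yields $ac^2 + a^2 c - a b^2 - a b c - b^2 c + b^3 \geq 0$. A routine expansion factors this as $s \bigl( ac - (s - (c+a))^2 \bigr) \geq 0$ with $s = c + a - b$, and a short case analysis (using $P > 0$, which follows from $D > 0$) forces $s \leq 0$, i.e.\ $c \leq b - a$. From this, $P = b^2 - ac \geq b^2 - ab + a^2$, and combining with $P \leq \sqrt{K}/9$ and completing the square in $b$ gives the stated upper bound on $b$.

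For the lower bound on $c$, I will use the identity $a_1 = aQ - 2bP$ (a direct computation from $a_1 = -a^2 d + 3 abc - 2 b^3$ and $a d = b c - Q$), which lets me rewrite the syzygy at $(1, 0)$ as $108 P^3 = 27 (a Q - 2 b P)^2 + D a^2$. Since $Q \geq -P$ and $a + 2b > 0$, one has $(aQ - 2bP)^2 \leq (a + 2b)^2 P^2$, with equality at $Q = -P$; substituting and using $D \leq K$ gives $108 P^3 \leq 27 (a + 2 b)^2 P^2 + K a^2$. Setting $P_2 = 9 P$ and clearing denominators, this is exactly $-4 P_2^3 + (3 a + 6 b)^2 P_2^2 + 27 a^2 K \geq 0$. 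A quick calculus check (the cubic equals $27 a^2 K > 0$ at $0$, has a single positive critical point at $(3a+6b)^2/6$, and tends to $-\infty$) confirms it has a unique positive real root $P_2$, so $9(b^2 - ac) \leq P_2$ and therefore $c \geq (9 b^2 - P_2)/(9a)$. The main obstacle is the algebraic manipulation establishing $c \leq b - a$ from reducedness; once in hand, the remaining bounds flow cleanly from the Gauss bound on $P$ and from the syzygy evaluated at $(1,0)$.
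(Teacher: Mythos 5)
Your proof is correct, and it follows essentially the argument behind the result the paper relies on: the paper gives no proof of its own, simply citing Lemma~3.5 of Belabas, whose derivation is exactly this combination of Gauss reduction applied to the Hessian (giving $P=b^2-ac\leq\sqrt{D}/9$), the inequality $c\leq b-a$ extracted from the reduction conditions, and the syzygy $108H_1^3=27G_1^2+DF^2$ evaluated at $(1,0)$. I checked the key computations — the factorization $(c+a-b)(ac-b^2)\geq 0$, the identity $a_1=aQ-2bP$, and the passage to the cubic in $P_2=9P$ — and they all hold, so your write-up in fact supplies details the paper omits.
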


  In practice, to avoid particularly large loops on the coefficients  $a, b, c$ and $d$, we will instead employ a slight  refinement  of this lemma to treat forms with discriminants in the interval $(K_0,K]$ for given positive reals $K_0 < K$. It is easy to check that we have
$$
\frac{9b^2-P_2}{9a} \leq c \leq  \min \left \{ { \frac{3b^2-(a^2 K_0/4)^{1/3}}{3a}},{b-a} \right \}.
$$
To bound $d$, we note that the definition of reduction implies that     
$$ 
\frac{(a+b)c-b^2}{a} \leq d \leq  \frac{(a-b)c-b^2}{a}.
$$
The further assumption that $K_0 < D_F \leq K$ leads us to a quadratic equation in  $d$ which we can solve to determine a second interval for $d$. Intersecting these intervals provides us with (for values of $K$ that are not too large) a reasonable search space for $d$.
    
%-------------------------------------------------------
\subsection{Forms of negative discriminant}
%-------------------------------------------------------

In case of negative discriminant,  we require a different notion of reduction, as the Hessian is no longer a definite form. We will instead, following Belabas \cite{Be}, appeal to an idea of Berwick and Mathews \cite{BeMa}. We take as our definition of a reduced form an alternative characterization due to Belabas (Lemma 4.2 of \cite{Be}).
\begin{dfn}
An irreducible binary integral cubic form 
$$
F(x,y) = ax^3 + 3 b x^2 y + 3 c x y^2 + d y^3
$$
of negative discriminant is called {\it reduced} if we have
 \begin{itemize}
 \item $d^2 - a^2 > 3 ( bd-ac)$,
 \item  $-(a-3b)^2-3ac< 3 (ad-bc) < (a+3b)^2+3ac$,
 \item $a>0 ,  \, b \geq  0 \ \  \text{and} \ \ d >0 \ \ \text{whenever} \ \ b=0$.   
 \end{itemize}
\end{dfn}      
                 
Analogous to Proposition \ref{goodie}, we have, as a consequence of Lemma 4.3 of \cite{Be} :             
   \begin{proposition}Any irreducible cubic form of the shape  (\ref{form}) with negative discriminant  is $GL_2 ( \mathbb{Z})$-equivalent to a unique reduced one. 
  \end{proposition}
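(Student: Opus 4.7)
The plan is to reduce the claim, as in the positive discriminant case, to the uniqueness of Gauss reduction for positive definite binary quadratic forms, by associating to $F$ a canonical positive definite quadratic covariant $Q_F$. Since $D_F<0$, the cubic $F$ factors over $\mathbb{R}$ as $F=L\cdot Q_0$ with $L$ a real linear form and $Q_0$ a positive definite real quadratic form, so one can follow Berwick--Mathews (or equivalently Julia) and attach to $F$ a specific positive definite binary quadratic form $Q_F(x,y)$ whose coefficients are explicit polynomials in $a,b,c,d$, and which satisfies $Q_{F\circ\gamma}=Q_F\circ\gamma$ for every $\gamma\in GL_2(\mathbb{Z})$.

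For existence, I would take an arbitrary $F$ of the shape (\ref{form}) with $D_F<0$ and apply the classical Gauss reduction algorithm to $Q_F$: there is some $\gamma_0\in SL_2(\mathbb{Z})$ such that $Q_F\circ\gamma_0=Ax^2+Bxy+Cy^2$ satisfies $2|B|\le A\le C$. A direct (if tedious) computation then shows that these inequalities, rewritten in terms of the coefficients of $F\circ\gamma_0$, are exactly the first two displayed conditions in the definition of reduced. A final adjustment by one of the sign-changing generators of $GL_2(\mathbb{Z})$ (namely $\pm I$ and $\mathrm{diag}(1,-1)$) puts the transformed cubic into the normalization $a>0$, $b\ge 0$, and $d>0$ when $b=0$, giving the third condition.

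Uniqueness follows from the corresponding uniqueness in Gauss reduction: if two reduced forms $F_1$ and $F_2$ were $GL_2(\mathbb{Z})$-equivalent via some $\gamma$, then $Q_{F_1}$ and $Q_{F_2}$ would both be Gauss-reduced and equivalent, forcing $Q_{F_1}=Q_{F_2}$ and constraining $\gamma$ to lie in the (finite) stabilizer of a reduced positive definite quadratic form in $GL_2(\mathbb{Z})$. A short case analysis on this stabilizer, combined with the strict sign/boundary conventions in the definition of reduced, then forces $\gamma=\pm I$ and $F_1=F_2$.

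The main obstacle is the algebraic bookkeeping in the correspondence $F\leftrightarrow Q_F$: one must write $A,B,C$ explicitly as polynomials in $(a,b,c,d)$ (they turn out to be quartic), verify that the Berwick--Mathews inequalities $d^2-a^2>3(bd-ac)$ and $-(a-3b)^2-3ac<3(ad-bc)<(a+3b)^2+3ac$ are precisely the translations of $2|B|\le A\le C$, and then confirm that the boundary normalizations on the cubic single out one element from each stabilizer orbit. This computational verification is the content of Lemma 4.3 of Belabas \cite{Be}; once it is in hand, the proposition follows immediately from Gauss's theorem on the reduction of positive definite binary quadratic forms.
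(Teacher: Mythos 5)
Your outline is, in substance, the argument of the source the paper itself leans on: the paper offers no proof of this proposition, simply deducing it from Lemma 4.3 of Belabas \cite{Be}, and that lemma is proved exactly as you describe (attach a positive definite quadratic to $F$ following Berwick--Mathews, Gauss-reduce it, and break the tie coming from the finite stabilizer with the sign/boundary normalizations). So the strategy is the right one and matches the cited reference.

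One claim in your write-up is, however, genuinely false and worth correcting: there is \emph{no} positive definite quadratic covariant $Q_F$ of a binary cubic whose coefficients are polynomials in $a,b,c,d$. The covariant algebra of a binary cubic is generated by $F$, the Hessian $H$, the Jacobian $G$ and the discriminant, so every quadratic covariant is a scalar multiple of $H$ (times a power of the discriminant) --- and $H$ is indefinite precisely when $D_F<0$, which is the whole reason the positive-discriminant reduction theory cannot be reused here. The Berwick--Mathews (and Julia) quadratic is instead the positive definite quadratic factor of $F$ over $\mathbb{R}$: if $\theta$ is the unique real root of $F(x,1)$, one takes (up to normalization) $F(x,y)/(x-\theta y)$, whose coefficients involve $\theta$ and are only algebraic, not polynomial, in $a,b,c,d$. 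It is still equivariant under $GL_2(\mathbb{Z})$ because the real root is canonical, so the reduction-and-stabilizer argument goes through unchanged; the inequalities $d^2-a^2>3(bd-ac)$ and $-(a-3b)^2-3ac<3(ad-bc)<(a+3b)^2+3ac$ arise only after eliminating $\theta$ using $F(\theta,1)=0$, which is the content of Belabas's Lemma 4.2. With that correction your proof sketch is sound, though like the paper it ultimately delegates the computational verification to \cite{Be}.
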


To count the number of reduced cubic forms in this case we appeal to Lemma 4.4 of Belabas \cite{Be} :
   \begin{lemma}
  Let $K$ be a positive real number and 
 $$
 F(x,y)= a x^3 + 3 b x^2 y + 3 c x y^2 + d y^3
 $$
be a reduced form whose discriminant lies in $[-K,0)$. Then we have 
$$ 
1 \leq a \leq   \left ( \frac {16K} {27} \right )^{1/4} 
$$
$$ 
0 \leq b  \leq  \frac{a}{2}+ \frac{1}{3} \,  \left( \sqrt{K/3} - \frac {3a^2}{4} \right)^{1/2}
$$
$$ 
1-3b \leq 3c \leq \left(\frac{K}{4a} \right)^{1/3} + \; \;
\left\{
\begin{array}{cc}
3 b^2/a & \mbox{ if } a \geq 2b, \\
3b - 3a/4 & \mbox{ otherwise.}
\end{array}
\right.
$$
\end{lemma}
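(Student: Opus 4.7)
The plan is to derive each of the three coefficient bounds directly from the reduction inequalities in the preceding definition, combined with the hypothesis $|D_F| \leq K$. The underlying strategy, due to Berwick and Mathews and formalised by Belabas, associates to any cubic form $F$ of negative discriminant a positive definite binary quadratic form $Q_F$ whose discriminant is a fixed rational multiple of $D_F$. The three reduction inequalities on the coefficients of $F$ are precisely the conditions for $Q_F$ to lie in the classical Gauss fundamental domain for positive definite binary quadratic forms under $SL_2(\mathbb{Z})$. Standard coefficient bounds for such a reduced $Q_F$ then translate back into bounds on $a$, $b$, and $c$.

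For the bound on $a$, the leading coefficient of $Q_F$ is a polynomial in $a$, $b$, $c$ whose dominant term is proportional to $a^2$, and Gauss reduction forces it to be at most $\sqrt{|\mathrm{disc}(Q_F)|/3}$. Isolating the $a^2$ contribution and using $|D_F| \leq K$ yields $a^4 \leq 16K/27$, while $a \geq 1$ is trivial since $a$ is a positive integer.

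For the bound on $b$ with $a$ fixed, combining the first reduction inequality with the upper bound on the leading coefficient of $Q_F$ produces an inequality equivalent to
\[
9(b - a/2)^2 + 3a^2/4 \leq \sqrt{K/3},
\]
which rearranges to exactly the stated bound; the lower bound $b \geq 0$ is part of the definition. For the bounds on $c$, the lower bound $1 - 3b \leq 3c$ follows directly from the second reduction inequality together with the integrality of $c$, upon taking the smallest feasible integer value of $d$ compatible with the other reduction conditions. The upper bound on $c$ splits into two cases according to the sign of $a - 2b$; each case corresponds to a different facet of the Gauss fundamental domain being active at the extremum. Substituting the boundary configuration into the explicit expression for $|D_F|$ and using $|D_F| \leq K$ gives a cubic inequality in $c$ whose largest positive root is precisely $(K/(4a))^{1/3}$ plus the case-dependent correction term.

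The main obstacle is the case split in the upper bound for $c$: identifying which facet of the fundamental domain is active at the extremum requires careful bookkeeping, and it is here that the bulk of the technical work resides. In practice, since the result is Lemma 4.4 of Belabas \cite{Be}, once the dictionary between our reduction inequalities and his formulation has been made explicit, the statement may simply be cited rather than re-derived.
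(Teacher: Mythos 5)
The paper offers no proof of this lemma: it is quoted verbatim (in the normalisation $a, 3b, 3c, d$) as Lemma 4.4 of Belabas \cite{Be}, so your closing remark that the statement may simply be cited is exactly what the authors do, and the strategy you outline --- the Berwick--Mathews positive definite covariant quadratic, Gauss reduction of that quadratic, and comparison of its discriminant with $D_F$ --- is indeed the strategy of Belabas's own proof. In that sense the proposal is aligned with the paper. If, however, the sketch were meant to stand on its own, it would not yet constitute a proof: the bounds on $a$ and $b$ are plausibly reduced to the stated intermediate inequality $9(b-a/2)^2 + 3a^2/4 \leq \sqrt{K/3}$, but the two bounds on $c$ are only asserted. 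In particular, it is not clear that ``taking the smallest feasible integer value of $d$'' in the second reduction inequality produces the lower bound $1 - 3b \leq 3c$ (that inequality has to be extracted from the compatibility of the reduction conditions by an explicit computation, not from a choice of $d$), and the identification of $\left(K/(4a)\right)^{1/3}$ plus the case-dependent correction as ``the largest positive root'' of the resulting cubic inequality is precisely the content of the lemma, so deferring it to ``careful bookkeeping'' leaves the substantive work undone. Since the paper's own treatment is a citation, the clean resolution is the one you name in your final sentence: state the dictionary between the reduction conditions here and those of \cite{Be} and invoke Lemma 4.4 there directly.
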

  
  As in the case of forms of positive discriminant, from a computational viewpoint it is often useful to restrict our attention to forms with discriminant $\Delta$ with $-\Delta \in ( K_0, K]$ for given $0 < K_0 < K$. Also as previously, the loop over $d$ is specified by the inequalities defining reduced forms and by the definition of discriminant.
  
  It is worth noting here that a somewhat different notion of reduction for cubic forms of negative discriminant is described in Cremona \cite{Cr}, based on classical work of Julia \cite{Ju}. Under this definition, one encounters rather shorter loops for the coefficient $a$ -- it appears that this leads to a slight improvement in the expected complexity of this approach (though the number of tuples $(a,b,c,d)$ considered is still linear in $K$).
  
  %---------------------------------------
  \subsection{Reducible forms}
  %---------------------------------------
  
  Suppose finally that $F$ is a reducible cubic form of discriminant $-108k$, as in (\ref{form}), for which $F(x_0,y_0)=1$ for some pair of integers $x_0$ and $y_0$. Then, under  $SL_2 ( \mathbb{Z})$-action, $F$ is necessarily equivalent to
  \begin{equation} \label{reduced form}
  f(x,y)=x(x^2+3Bxy+3C y^2),
  \end{equation}
  for certain integers $B$ and $C$.
We thus have  
\begin{equation} \label{disc}
D_f=D_F=27C^2(3B^2-4C)
\end{equation}
(so that necessarily $BC \equiv 0 \mod{2}$). Almost immediate from (\ref{disc}), we have

\begin{lemma} Let $K > 0$ be a real number and suppose that
$f$ is a cubic form as in  (\ref{reduced form}). If we have $0 < D_f \leq K$ then 
$$
   -\left( \frac{K}{108} \right)^{1/3}  \leq C \leq \left( \frac{K}{27} \right)^{1/2}, \; \; C \neq 0
   $$
   and
   $$
   \max \left\{ 0,  \left( \frac{4C+1}{3} \right)^{1/2} \right\} \leq B \leq \left( \frac{K+108 C^3}{81 C^2}  \right)^{1/2}.
   $$
If, on the other hand, $-K \leq D_f < 0$ then
$$
  1 \leq C \leq \left( \frac{K}{27} \right)^{1/2}
$$
and
$$
\max \left\{ 0, \left( \frac{-K+108 C^3}{81 C^2}  \right)^{1/2} \right\} \leq B \leq \left( \frac{4C-1}{3} \right)^{1/2}.
$$
\end{lemma}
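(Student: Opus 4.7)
The plan is to derive all four sets of inequalities as direct algebraic consequences of the discriminant identity (\ref{disc}), namely $D_f = 27 C^2 (3B^2 - 4C)$, together with the integrality of $B$ and $C$ and the standing normalization $B \geq 0$ (which one may assume after applying the involution $y \mapsto -y$, preserving the shape (\ref{reduced form}) and $C$ while sending $B$ to $-B$).

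For the positive-discriminant case $0 < D_f \leq K$, I would first observe that $C = 0$ forces $D_f = 0$, so $C \neq 0$, and the factor $3B^2 - 4C$ is then a positive integer, hence at least $1$. If $C > 0$, this inequality supplies the lower bound $B \geq \sqrt{(4C+1)/3}$, while combining it with $D_f \geq 27 C^2$ and $D_f \leq K$ gives $C \leq \sqrt{K/27}$. If instead $C < 0$, one discards the non-negative term $3B^2$ to obtain $D_f \geq 27 C^2 \cdot (-4C) = -108 C^3$, whence $C \geq -(K/108)^{1/3}$. In either sign, rearranging $27 C^2 (3B^2 - 4C) \leq K$ yields $B^2 \leq (K + 108 C^3)/(81 C^2)$, which is the stated upper bound on $B$.

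For the negative-discriminant case $-K \leq D_f < 0$, the factor $3B^2 - 4C$ is now a negative integer and hence at most $-1$, forcing $3B^2 \leq 4C - 1$. This gives the upper bound $B \leq \sqrt{(4C-1)/3}$; moreover, since $B \geq 0$ is an integer, the inequality $4C - 1 \geq 3B^2 \geq 0$ forces $C \geq 1$. From $|D_f| \geq 27 C^2$ together with $|D_f| \leq K$ one obtains $C \leq \sqrt{K/27}$. Finally, rewriting $-K \leq D_f$ as $81 C^2 B^2 \geq -K + 108 C^3$ and taking the positive square root --- replaced by $0$ whenever the right-hand side is negative --- yields the claimed lower bound on $B$.

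No serious obstacle is expected; the argument is purely algebraic bookkeeping. The only point that requires a moment's care is the use of integrality to upgrade the strict inequalities $3B^2 - 4C > 0$ and $3B^2 - 4C < 0$ to $3B^2 - 4C \geq 1$ and $3B^2 - 4C \leq -1$ respectively, since this is precisely what produces the ``$+1$'' and ``$-1$'' appearing in the refined bounds on $B$.
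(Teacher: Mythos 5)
Your proposal is correct and follows exactly the route the paper intends: the paper states this lemma as ``almost immediate'' from the identity $D_f=27C^2(3B^2-4C)$ and gives no further details, and your argument is precisely the straightforward algebraic bookkeeping (sign analysis of the factor $3B^2-4C$, integrality to upgrade strict inequalities, and rearrangement of $|D_f|\leq K$) that fills in those details, including the correct observation that $B\geq 0$ is a harmless normalization via $y\mapsto -y$.
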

 
 One technical detail that remains for us, in the case of reducible forms, is that of  identifying $SL_2 ( \mathbb{Z})$-equivalent forms. If we have
$$
f_1(x,y)= x(x^2+3B_1xy+3C_1 y^2) \; \mbox{ and } \; f_2(x,y)=x(x^2+3B_2xy+3C_2 y^2),
$$
with
\begin{equation} \label{death}
 f_1 \circ \tau  (x,y) = f_2(x,y) \; \mbox{ and } \tau(1,0) = (1,0),
 \end{equation}
 where $\tau \in SL_2 ( \mathbb{Z})$,
then necessarily $\tau =\left(\begin{smallmatrix} 
 1 & n \\
  0 & 1 
 \end{smallmatrix} \right),
 $
 for some integer $n$, so that
 $$
 B_2=B_1+n, \; C_2 = C_1 + 2 B_1 n + n^2 \; \mbox{ and } \;
 3 C_1 n+3 B_1 n^2+n^3=0.
 $$
 Assuming $n \neq 0$, it follows that $f_1(x,1)$ factors completely over $\mathbb{Z} [x]$, whereby, in particular, $C_1=3 C_0$ for $C_0 \in \mathbb{Z}$ and $B_1^2-4C_0$ is a perfect square, say
 $B_1^2-4C_0 = D_0^2$ (where $D_0 \neq 0$ if we assume that $D_{f_1} \neq 0$). We may thus write
 $$
 n = \frac{ -3 B_1 \pm 3 D_0}{2},
 $$
 whence there are precisely three pairs $(B_2, C_2)$ satisfying (\ref{death}), namely
 $$
 (B_2, C_2) = (B_1,C_1), \; \left(  \frac{ - B_1 + 3 D_0}{2},  \frac{3}{2} D_0 (D_0-B_1) \right)
  $$
 and
 $$
 \left(  \frac{ - B_1 - 3 D_0}{2},  \frac{3}{2} D_0 (D_0+B_1) \right).
 $$
 
Let us define a notion of reduction for forms of the shape   (\ref{reduced form}) as follows :

  \begin{dfn} A irreducible binary integral cubic form 
$$
F(x,y) = x ( x^2 + 3 b x y + 3 c y^2 ) 
$$
of nonzero discriminant $D_F$ is called {\it reduced} if we have either
 \begin{itemize}
 \item $D_F$ is not the square of an integer, or 
 \item  $D_F$ is the square of an integer, and  $b$ and $c$ are positive. 
 \end{itemize}
 \end{dfn}  
 
 From the preceding discussion, it follows that such reduced forms are unique in their  $SL_2 ( \mathbb{Z})$-class.
   Note that the solutions to the equation
  $$
  F(x,y) = x(x^2+3bxy+3c y^2) = 1
  $$
  are precisely those given by $(x,y)=(1,0)$ and, if $c \mid b$, $(x,y)=(1,-b/c)$.
  
%-----------------------------------------
\section{Running the algorithm}  \label{comp}
%-------------------------------------

We implement the  algorithm implicit in the preceding sections for finding the integral solutions to equations of the shape (\ref{Mord}) with $|k|\leq K$, for given $K > 0$. The number of cubic Thue equations  $F(x,y)=1$ which we are required to solve is of order $K$. To handle these equations, we appeal to by now well-known arguments of Tzanakis and de Weger \cite{TW} (which, as noted previously, are based upon lower bounds for linear forms in complex logarithms, together with lattice basis reduction); these are implemented in several computer algebra packages, including Magma and Pari (Sage). We used the former despite concerns over its reliance on closed-source code, primarily due to its stability for longer runs. The main computational bottleneck in this approach is typically that of computing the fundamental units in the corresponding cubic fields; for computations with $K=10^7$, we encountered no difficulties with any of the Thue equations arising (in particular, the fundamental units occurring can be certified without reliance upon the Generalized Riemann Hypothesis). We are unaware of a computational complexity analysis, heuristic or otherwise, of known algorithms for solving Thue equations and hence it is not by any means obvious how timings for this approach should compare to that based upon lower bounds for linear forms in elliptic logarithms, as in \cite{G}. Using a ``stock'' version of Magma, our approach does have the apparent advantage of not crashing for particular values of $k$ in the range under consideration (i.e with $K=10^7$).

%-----------------------------------------
\section{Numerical results}  \label{Numer}
%-----------------------------------------

The full output for our computation is available at the weblink

\url{http://www.math.ubc.ca/~bennett/BeGa-data.html},

\noindent  which documents the results of a month-long run on a MacBookPro. Realistically, with sufficient perseverance and suitably many machines, one should be able to readily extend the results described here to something like $K=10^{10}$. In the remainder of this section, we will  briefly summarize our data.

\subsection{Number of solutions} 
In what follows, we tabulate the number of curves encountered for which equation (\ref{Mord}) has  a given number $N_k$ of integer solutions, with $|k| \leq 10^7$. We include results for all values of $k$, and also those obtained by restricting to $6$-th power free $k$ (the two most natural restrictions here are, in our opinion,  this one or the restriction to solutions with $\gcd (X,Y)=1$).
In the  range under consideration, the maximum number of solutions encountered for positive values of $k$ was $58$,  corresponding to the case $k=3470400$. For negative values of $k$, the largest number of solutions we found was $66$, for $k=-9754975$.

Regarding the largest value of $N_k$ known (where, to avoid trivialities, we can, for example, consider only $6$-th power free $k$), Noam Elkies kindly provided the following example, found in October of 2009  :
$$
 k = 509142596247656696242225 = 5^2 \cdot 7^3 \cdot 11^2 \cdot 19^2 \cdot 149 \cdot 587 \cdot 15541336441.
 $$
This value of $k$ corresponds to an elliptic curve of 
rank (at least) $12$, with
(at least) $125$ pairs of integral points (i.e. $N_k \geq 250$), with $X$-coordinates ranging from $-79822305$ to $801153865351455$.

\begin{table}[h]
\caption{Number of Mordell Curves with $N_k$ integral  points  for positive values of $k$ with $ 0 < k \leq 10^7$}
\centering
\begin{tabular}{cc|cc|cc}
\hline \hline
$N_k$ &  \# of curves & $N_k$ & \# of curves & $N_k$ & \# of curves  \\ \hline
0 & 8667066 & 12 & 3890 & 32 & 33 \\
1 & 108 & 14 & 2186 & 34 & 18 \\
2 & 1103303 & 16 & 1187 & 36 & 28 \\
3 & 34 & 17 & 1 & 38 & 17 \\
4 & 145142 & 18 & 589 & 40 & 11 \\
5 & 33 & 20 & 347 & 42 & 6 \\
6 & 55518 & 22 & 197 & 44 & 3 \\
7 & 8 & 24 & 148 & 46 & 5 \\
8 & 13595 & 26 & 91 & 48 & 2 \\
9 & 6 & 28 & 63 &56 & 1 \\
10 & 6308 & 30 & 55 & 58 & 1 \\ \hline
\end{tabular}
\label{tab:one}
\end{table}

\begin{table}[H]
\caption{Number of Mordell Curves with $N_k$ integral  points  for negative values of $k$ with $\left \lvert k  \right \rvert \leq 10^7$}
\centering
\begin{tabular}{cc|cc|cc}
\hline \hline
$N_k$ &  \# of curves & $N_k$ & \# of curves & $N_k$ & \# of curves  \\ \hline
0 & 9165396 & 11 & 4 & 32 & 8 \\
1 & 167 & 12 & 1351 & 34 & 8 \\
2 & 729968 & 14 & 655 & 36 & 2 \\
3 & 23 & 16 & 340 & 38 & 1 \\
4 & 67639 & 18 & 238 & 40 & 1 \\
5 & 10 & 20 & 160 & 42 & 1 \\
6 & 23531 & 22 & 107 & 44 & 2 \\
7 & 3 & 24 & 71 & 46 & 2 \\
8 & 7318 & 26 & 37 & 48 & 1 \\
9 & 6 & 28 & 20 & 50 & 2 \\
10 & 2912 & 30 & 15 & 66 & 1 \\ \hline
\end{tabular}
\label{tab:two}
\end{table}

\begin{table}[H]
\caption{Number of Mordell Curves with $N_k$ integral  points for positive  $k \leq 10^7$ $6$-th power free}
\centering
\begin{tabular}{cc|cc|cc}
\hline \hline
$N_k$ &  \# of curves & $N_k$ & \# of curves & $N_k$ & \# of curves  \\ \hline
0 & 8545578  & 12 &3575  & 32 & 29 \\
1 & 79 & 14 & 1998 & 34 & 18  \\
2 &1067023 & 16 & 1055 & 36 & 22 \\
3 & 24 & 17 & 1 & 38 &15  \\
4 & 139090 & 18 & 506 & 40 & 9 \\
5 & 10 & 20 & 278 & 42 & 6 \\
6 & 51721 & 22 & 161 & 44 &1  \\
7 & 2& 24 &112  & 46 & 5 \\
8 &12271 & 26 &76  & 48 &2  \\
9 & 1 & 28 & 58 &56 & 1 \\
10 & 5756 & 30 &44  &  &  \\ \hline
\end{tabular}
\label{tab:three}
\end{table}

\begin{table}[H]
\caption{Number of Mordell Curves with $N_k$ integral  points for negative $k$, $\left \lvert k  \right \rvert \leq 10^7$ $6$-th power free}
\centering
\begin{tabular}{cc|cc|cc}
\hline \hline
$N_k$ &  \# of curves & $N_k$ & \# of curves & $N_k$ & \# of curves  \\ \hline
0 & 9026739  & 11 & 1 & 32 &8  \\
1 &  109 & 12 & 1174 & 34 & 8  \\
2 & 705268  & 14 & 562  & 36 & 2  \\
3 &  12& 16 &291  & 38 & 1 \\
4 & 63685 & 18 &197  & 42 & 1 \\
5 &  5& 20 &138  & 44 &2  \\
6 & 21883 & 22 &96  & 46 & 2 \\
7 & 3 & 24 & 68 & 48 &1  \\
8 &6644  & 26 & 31 & 50 & 1 \\
9 & 3 & 28 &17  & 66 &  1\\
10 & 2561 & 30 &13  &  &  \\ \hline
\end{tabular}
\label{tab:four}
\end{table}

\subsection{Number of solutions by rank}

From a result of Gross and Silverman \cite{GrSi}, if $k$ is a $6$-th power free integer, then the number of  integral solutions $N_k$ to equation (\ref{Mord}) is bounded above by a constant $N(r)$ that depends only on the Mordell-Weil rank over $\mathbb{Q}$ of the corresponding elliptic curve. It is easy to show that we have $N(0)=6$, corresponding to $k=1$. For larger ranks, we have that $N(1) \geq 12$ (where the only example of a rank $1$ curve we know with $N_k=12$ corresponds to $k=100$), $N(2) \geq 26$ (where $N_{225}=26$), $N(3) \geq 46$ (with $N_{1334025}=46$), $N(4) \geq 56$ (with $N_{5472225}=56$), $N(5) \geq 50$ (with $N_{-9257031}=50$) and $N(6) \geq 66$ (where $N_{-9754975}=66$). The techniques of Ingram \cite{Ing} might enable one to prove that indeed one has $N(1)=12$.

\subsection{Hall's conjecture and large integral points}
Sharp upper bounds for the heights of integer solutions to equation (\ref{Mord}) are intimately connected to the $ABC$-conjecture of Masser and Oesterle. In this particular context, we have the following conjecture of Marshall Hall :
\begin{conjecture} (Hall)
Given $\epsilon >0$, there exists a positive constant $C_\epsilon$ so that, if $k$ is a nonzero integer, then the inequality  
$$ 
\left\lvert X \right\rvert < C_\epsilon |k|^{2+\epsilon} 
$$
holds for all solutions in integers $(X,Y)$ to equation (\ref{Mord}).
\end{conjecture}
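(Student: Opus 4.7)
Hall's conjecture is among the most famous open problems on integral points of elliptic curves, and an unconditional proof is well beyond the reach of the techniques in this paper. My plan is therefore to describe how the bound drops out of the \emph{ABC}-conjecture of Masser--Oesterl\'{e}, the connection already advertised in the sentence immediately preceding the statement.

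First I would apply \emph{ABC} to the identity $X^{3}+k=Y^{2}$. After extracting common prime-power factors from the three terms (for each prime $p$, divide through by $p^{\min(3\,\mathrm{ord}_{p}X,\,\mathrm{ord}_{p}k,\,2\,\mathrm{ord}_{p}Y)}$), the equation becomes a primitive triple $A+B=C$ with $\gcd(A,B,C)=1$. The \emph{ABC} conjecture then asserts
\[
\max(|A|,|B|,|C|)\le C_{\epsilon}\,\mathrm{rad}(ABC)^{1+\epsilon}.
\]
Since $\mathrm{rad}(A)\le|X|$, $\mathrm{rad}(B)\le|k|$ and $\mathrm{rad}(C)\le|Y|$, the right-hand side is bounded by $C_{\epsilon}(|X|\,|k|\,|Y|)^{1+\epsilon}$. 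In the only non-trivial regime, $|X|^{3}\ge 2|k|$ (the complementary range already yields $|X|\le(2|k|)^{1/3}$, which is well within Hall), we have $|Y|\le\sqrt{2}\,|X|^{3/2}$. Substituting and retaining only the $|X|^{3}$ term on the left yields
\[
|X|^{3}\ \le\ C_{\epsilon}'\,|X|^{(5/2)(1+\epsilon)}\,|k|^{1+\epsilon},
\]
that is, $|X|^{1/2-5\epsilon/2}\le C_{\epsilon}'\,|k|^{1+\epsilon}$. Taking $\epsilon$ sufficiently small in terms of $\eta$ and rearranging gives $|X|\le C_{\eta}\,|k|^{2+\eta}$, as desired.

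The single decisive obstacle is of course that the \emph{ABC}-conjecture is itself open, so the argument above is strictly conditional: it reduces Hall to \emph{ABC} without bringing either closer to resolution. Unconditionally, the Baker-type lower bounds for linear forms in logarithms that drive the rest of the paper yield only bounds of the shape $\log|X|\ll_{\epsilon}|k|^{\kappa}$ for some positive $\kappa$, exponentially weaker than the conjectured polynomial bound and with no hope of matching Hall even with arbitrarily patient computation. Lacking a fundamentally new idea, the best that can honestly be offered in place of a proof is empirical: tabulate the worst $|X|/|k|^{2}$ ratios from the data assembled above as numerical support, and leave the statement labelled as what it is --- a conjecture.
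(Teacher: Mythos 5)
You are right that no proof is possible here: the paper states Hall's conjecture precisely because it is an open problem, and its only ``treatment'' is the empirical one you propose at the end --- Table 5 tabulates the extremal values of the Hall measure $X^{1/2}/|k|$ found in the range $|k|\leq 10^7$. So your overall assessment (conditional on $ABC$, otherwise only exponentially weaker bounds of Baker type are known, leave it as a conjecture and report the data) matches both the mathematical reality and what the authors actually do.

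One caveat about your conditional derivation from $ABC$: as written it is only valid when $\gcd(X,Y)=1$. Passing to a primitive triple divides all three terms by $d=\gcd(X^3,Y^2)$, so the quantity that $ABC$ bounds is $\max(|A|,|B|,|C|)=|X|^3/d$, not $|X|^3$; your step ``retaining only the $|X|^3$ term on the left'' silently sets $d=1$. Since $d\mid k$, the crude repair $d\leq |k|$ only yields $|X|\ll_\epsilon |k|^{3+\epsilon}$ or worse, not the exponent $2$. The implication $ABC\Rightarrow$ weak Hall is nevertheless correct in general, but the non-coprime case needs an extra descent: for each prime $p\mid\gcd(X,Y)$ one writes $X=pX_0$, $Y=pY_0$, $k=p^2(Y_0^2-pX_0^3)$ and applies $ABC$ to the transformed relation $pX_0^3+k_0=Y_0^2$ (or one removes the largest $t$ with $t^2\mid X$, $t^3\mid Y$ first and then handles the residual common factors), after which the powers of $p$ recombine to give $|X|\ll_\epsilon |k|^{2+\epsilon}$. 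This is a repairable bookkeeping gap in an argument you correctly flag as conditional, but it is worth knowing that the one-line version does not close.
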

The original statement of this conjecture,  in \cite{Hall}, actually predicts that a like inequality holds for $\epsilon = 0$. The current thinking is that such a result is unlikely to be true (though it has not been disproved). 

We next  list all the Mordell curves encountered  with Hall Measure $X^{1/2}/|k| $ exceeding $1$; in each case we round this measure to the second decimal place.  In the range under consideration, we found no new examples to supplement those previously known and recorded in Elkies \cite{Elk} (note that the case with $k=-852135$ was omitted from this paper due to a transcription error) and in work of Jim\'enez Calvo,  Herranz and S\'aez \cite{CHS}.

\begin{table}[h]
\caption{Hall's conjecture extrema for $\left \lvert k \right \rvert \leq  10^7$}
\centering
\begin{tabular}{ccc|ccc}
\hline \hline
$k$ & $X$ & $X^{1/2}/|k|$ & $k$ & $X$ & $X^{1/2}/|k|$ \\ \hline
-1641843	& 5853886516781223 & 46.60 & 1 & 2 & 1.41 \\ \hline
1090	 & 28187351& 	4.87 & 14668 & 384242766 & 1.34	\\ \hline
17 & 5234 &   4.26&14857 & 390620082	& 1.33 \\ \hline
 225 & 720114 &    3.77  &  -2767769 & 12438517260105 & 1.27 \\ \hline
 24 & 8158 &  3.76 & 8569 & 110781386  & 1.23  \\ \hline
 -307 & 939787 & 3.16 & -5190544 & 35495694227489 & 1.15 \\ \hline
 -207 & 367806 &  2.93 &  -852135 & 952764389446 & 1.15 \\ \hline
 28024 & 3790689201  & 2.20	 &  11492 & 154319269	 & 1.08  \\ \hline
 117073 &	 65589428378	& 2.19 &  618 & 421351 &  1.05  \\ \hline
 4401169 & 53197086958290	& 1.66 &  297 &   93844 & 1.03  \\ \hline
\end{tabular}
\label{tab:five}
\end{table}

Our final table lists all the values of $k$ in the range under consideration  for which equation (\ref{Mord}) has a sufficiently large solution :

\begin{table}[h]
\caption{Solutions to (\ref{Mord}) with $X > 10^{12}$ for $\left \lvert k \right \rvert \leq  10^7$}
\centering
\begin{tabular}{ccc|ccc}
\hline \hline
$k$ & $N_k$ & $X$ & $k$ & $N_k$ & $X$ \\ \hline
-1641843	& 6	& 5853886516781223 &  -1923767	& 2	& 2434890738626 \\
4401169	& 6	& 53197086958290	& -2860984	& 4	& 2115366915022  \\
-5190544	& 4	& 35495694227489	&  2383593	& 10	& 1854521158546 \\
-4090263	& 2	& 16544006443618	&  2381192	& 10	& 1852119707102 \\
-4203905	& 2	& 15972973971249	& -4024909	& 4	& 1569699004069  \\
-2767769	& 4	& 12438517260105	& -9218431	& 6	& 1183858050175 \\
7008155	& 2	& 9137950007869	&  5066001	& 8	& 1026067837540 \\
-9698283	& 2	& 7067107221619	&  5059001	& 8	& 1024245337460 \\
2214289	& 4	& 4608439927403	& 3537071	& 6	& 1007988055117 \\ \hline
\end{tabular}
\label{tab:six}
\end{table}

In particular, by way of example, the equation 
$$
Y^2= X^3 -4090263
$$
thus has the feature that its smallest (indeed only) solution in positive integers is given by
$$
(X,Y)=(16544006443618, 67291628068556097113).
$$
 
%\includepdf[pages=-]{large.pdf}

    \end{document}